\newtheorem{theorem}{Theorem}[subsection]
\newtheorem{definition}[theorem]{Definition}
\newtheorem{remark}[theorem]{Remark}
\newtheorem{example}[theorem]{Example}
\newtheorem*{thma}{Theorem A}
\def\calC{{\mathcal C}}
\def\calJ{{\mathcal J}}
\def\frakM{{\mathfrak M}}
\def\frakF{{\mathfrak F}}
\def\frakR{{\mathfrak R}}
\def\Hom{\mathop{\rm Hom}\nolimits}
\def\lim{\mathop{\varinjlim}\nolimits}
\def\Ob{\mathop{\rm Ob}\nolimits}
\DeclareMathOperator{\C}{\mathbf{C}}
\DeclareMathOperator{\Ab}{\rm Ab}
\DeclareMathOperator{\Ring}{\rm Ring}
\DeclareMathOperator{\rMod}{Mod-}
\DeclareMathOperator{\Add}{\rm Add}
\DeclareMathOperator{\op}{\rm op}
\begin{document}

\title[modules over a sum-id bipresheaf of rings]{Non-abelianess of the category of modules over a sum-id bipresheaf of rings}

\author{Mawei Wu}
\address{School of Mathematics and Statistics, Lingnan Normal University, Zhanjiang, Guangdong 524048, China}
\email{wumawei@lingnan.edu.cn}

\subjclass[2020]{18F20, 18A25, 16S60, 18E10, 18F10, 16D90}

\keywords{presheaf, copresheaf, bipresheaf, Grothendieck construction, abelian category, Grothendieck topology}



\begin{abstract}
Let $\calC$ be a small category, motivated by the definition of bisheaves of abelian groups of MacPherson and Patel (see \cite[Definition 5.1]{MP21}), we first introduce the notions of bipresheaves of rings $\frakR$ on $\calC$ and their module categories $\rMod \frakR$. Then the linear Grothendieck construction $Gr(\frakR)$ of $\frakR$ is defined. With this linear Grothendieck construction, we show that the category of bipresheaves of modules over a sum-id bipresheaf of rings $\frakR$ can be characterized as the category of bipresheaves of abelian groups on $Gr(\frakR)$. It follows that the category $\rMod \frakR$ of modules over a sum-id bipresheaf of rings $\frakR$ is non-abelian.  
\end{abstract}

\maketitle

\tableofcontents

\section{Introduction}
Let $\C=(\calC, \calJ)$ be a small site and let $R$ be a sheaf of rings on $\calC$, Howe in 1981 showed that the category of $R$-modules is equivalent to the category of sheaves of abelian groups on $Gr(R)$ (see \cite[Proposition 5]{How81}), where the category $Gr(R)$, which is called by Howe the semi-direct product of $\calC$ by $R$, is actually the linear Grothendieck construction of $R$ (see \cite[Definition 4.1]{Asa13b}). Instead of considering a sheaf of rings on $\calC$ and its modules, in 2024, we investigated a representation of the category $\calC$ (a pseudofunctor from a small category $\calC$ to the category of small preadditive categories $\Add$) and a dg-representation of the category $\calC$ (a pseudofunctor from a small category $\calC$ to the category of small dg $k$-categories $\mbox{dg-Cat}_k$, where $k$ is a commutative unital ring), as well as their modules. Using the linear Grothendieck construction, we obtained some category equivalences analogous to that of Howe, see \cite[Theorem A]{Wu24pseudo} and \cite[Theorem A]{Wu24dg}. 

In this paper, motivated by the definition of bisheaves of abelian groups of MacPherson and Patel (see \cite[Definition 5.1]{MP21}), we first introduce the notions of bipresheaves $\frakR$ of rings on $\calC$ (see Definition \ref{bishring}) and their modules (see Definition \ref{bishmod}). Our main goal is to try to extend Howe's equivalence (see \cite[Proposition 5]{How81}) to the setting of bipresheaves. For our purpose, we first give the definition of the linear Grothendieck construction $Gr(\frakR)$ of $\frakR$ (see Definition \ref{grocon}). With this linear Grothendieck construction, we show that the category of bipresheaves of modules over a sum-id bipresheaf $\frakR$ of rings (a particular kind of bipresheaf of rings, see Definition \ref{specialbishring}) can be characterized as the category $\rMod Gr(\frakR)$ of bipresheaves of abelian groups on $Gr(\frakR)$. It follows that the category $\rMod \frakR$ of $\frakR$-modules is non-abelian. To be more precise, we have the following theorem.

\begin{thma} {\rm (Theorem \ref{mainthm})}
Let $\calC$ be a small category and let $\frakR$ be a sum-id bipresheaf of rings on $\calC$, then we have the following category equivalence
 $$
 \rMod \frakR \simeq \rMod Gr(\frakR).
 $$ 
Consequently, the category $\rMod \frakR$ of bipresheaves of modules over $\frakR$ is non-abelian.     
\end{thma}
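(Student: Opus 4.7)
The plan is to adapt Howe's strategy \cite{How81} and its generalizations by the author \cite{Wu24pseudo, Wu24dg} by constructing an explicit pair of mutually quasi-inverse functors
$$
F\colon \rMod \frakR \longrightarrow \rMod Gr(\frakR), \qquad G\colon \rMod Gr(\frakR) \longrightarrow \rMod \frakR,
$$
and then to deduce non-abelianess of $\rMod \frakR$ from non-abelianess of $\rMod Gr(\frakR)$ through this equivalence.

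To build $F$, I would start from an $\frakR$-module $M$ and assign to each object $(c, r)$ of $Gr(\frakR)$ an abelian group built from the value of $M$ at $c$; to a morphism of $Gr(\frakR)$ — which by Definition \ref{grocon} bundles together a morphism of $\calC$ with a ring element — I would assign the composite of the corresponding transport map of the underlying bipresheaf with the corresponding ring action. Conversely, $G$ restricts a bipresheaf of abelian groups $N$ on $Gr(\frakR)$ along the canonical projection $Gr(\frakR) \to \calC$ and recovers the ring actions by evaluating $N$ on those morphisms of $Gr(\frakR)$ whose $\calC$-component is an identity.

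The \emph{sum-id} hypothesis of Definition \ref{specialbishring} plays a decisive role at precisely these junctures: it ensures that identity elements of the rings appearing in $\frakR$ induce identity morphisms in $Gr(\frakR)$, so that restriction along the projection does produce a genuine bipresheaf on $\calC$; and it guarantees that the homomorphisms obtained from different ring elements assemble additively into well-defined module actions on both the covariant and contravariant sides. Granted these compatibilities, the verifications that $FG \cong \Id$ and $GF \cong \Id$ reduce to routine diagram chases. I anticipate the main obstacle to be the bookkeeping between covariant and contravariant functorialities throughout: every piece of data here carries two directions at once, and one must verify that $F$ and $G$ intertwine the bipresheaf structure on both sides simultaneously, which roughly doubles the verifications compared to Howe's one-sided setting.

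Once the equivalence is in hand, the non-abelianess assertion reduces to showing that $\rMod Gr(\frakR)$ itself is not abelian. I would establish this by exhibiting a specific morphism of bipresheaves of abelian groups on $Gr(\frakR)$ whose componentwise kernel or cokernel fails the compatibility condition that links the covariant and contravariant parts of a bipresheaf. Transporting this counterexample back across the equivalence then produces a morphism in $\rMod \frakR$ witnessing the same failure, completing the proof.
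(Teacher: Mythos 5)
Your overall architecture coincides with the paper's: a functor $\Psi$ sending $\frakM=(\frakM_1,\frakM_2,\eta)$ to the bipresheaf on $Gr(\frakR)$ whose transition maps are sums of (ring action)\,$\circ$\,(transport), and a functor $\Phi$ restricting a bipresheaf on $Gr(\frakR)$ along the morphisms $(\cdots,0,(1_{\frakR_1(x)}\otimes 1_{\frakR_2(y)})_f,0,\cdots)$ and reading off the module actions from morphisms supported on identities of $\calC$. Non-abelianess is then inherited across the equivalence; the paper simply quotes MacPherson--Patel for the non-abelianess of the bipresheaf side rather than exhibiting an explicit counterexample as you propose, but that difference is inessential.

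The genuine gap is your account of where the sum-id hypothesis enters. It is not needed to make ring identities into identities of $Gr(\frakR)$ (that holds for any bipresheaf of rings; see Remark \ref{remark}(3)), nor to make the restricted functors into presheaves/copresheaves or the actions well defined. It is needed at exactly one place, which is also the only non-routine verification in the whole argument: showing that the original connecting maps $\eta_x$ still give a bipresheaf structure on $\Psi(\frakM)$ over the \emph{enlarged} hom-sets of $Gr(\frakR)$. Definition \ref{bishab} requires $\eta_y=\frakF_2(\alpha)\circ\eta_x\circ\frakF_1(\alpha)$ for \emph{every} $\alpha\in\Hom_{Gr(\frakR)}(x,y)$, not only for morphisms coming from $\calC$; unwinding the definitions, the right-hand side applied to $m\in\frakM_1(y)$ comes out as $\bigl(\sum_{r_1,r_2,f}r_1^{\theta}r_2\bigr)\cdot\eta_y(m)$, and this equals $\eta_y(m)$ precisely because of the sum-id condition of Definition \ref{specialbishring}. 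Declaring the remaining verifications ``routine diagram chases'' skips over the single step the hypothesis was introduced to rescue, so you would need to supply this computation explicitly. (A minor further inaccuracy: objects of $Gr(\frakR)$ are not pairs $(c,r)$; since each ring is viewed as a one-object category, the linear Grothendieck construction leaves the object set of $\calC$ unchanged.)
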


This paper is organized as follows. In Section \ref{bish}, we first recall the definitions of (co)presheaves of rings and their modules, and then some new notions, bipresheaves of rings and their modules, are introduced. In Section \ref{cha}, the definitions of the linear Grothendieck construction of a bipresheaf of rings and a special kind of bipresheaf of rings, namely, the sum-id bipresheaf of rings, are introduced. Then, using the linear Grothendieck construction, a characterization of the category of modules over a sum-id bipresheaf of rings is given.

Throughout this paper, we will denote by $\Ring$ the category of unital commutative rings and unital ring homomorphisms. The category of abelian groups is denoted by $\Ab$. All the Grothendieck topologies considered here are trivial topologies, that is, we mainly focus on studying (co)presheaves rather than (co)sheaves. All rings considered in this paper are assumed to be unital commutative, and all ring homomorphisms are assumed to be unital.

\section{Bipresheaves of rings and modules} \label{bish}
In this section, we will first recall the definitions of (co)presheaves of rings and their modules, and then bipresheaves of rings and their modules will be introduced. 
\subsection{(Co)presheaves of rings and modules}
Let $\calC$ be a small category, in this subsection, the definitions of (co)presheaves of rings on $\calC$ and their modules will be recalled. We refer the reader to \cite{stackpro,KS06} for more information. 
\begin{definition}
Let $\calC$ be a small category. A \emph{presheaf of rings on} $\calC$ is a contravariant functor from $\calC$ to the category of rings.  
\end{definition}

\begin{definition}
Let $\calC$ be a small category. A \emph{copresheaf of rings on} $\calC$ is a covariant functor from $\calC$ to the category of rings.   
\end{definition}

\begin{definition} {\rm (\cite[Definition 18.9.1]{stackpro})}
Let $\calC$ be a category, and let $\frakR_1: \calC^{\op} \to \Ring$ be a presheaf of rings on $\calC$. A \emph{presheaf of $\frakR_1$-modules} is given by an abelian presheaf $\frakM_1$ together with a map of presheaves of sets
    $$
    \frakM_1 \times \frakR_1 \to \frakM_1
    $$
    such that for every object $x$ of $\calC$ the map $\frakM_1(x) \times \frakR_1(x) \to \frakM_1(x)$ defines the structure of an $\frakR_1(x)$-module structure on the abelian group $\frakM_1(x)$.
\end{definition}

Dually, we also have the notion of a \emph{copresheaf of $\frakR_2$-modules} $\frakM_2$ for a copresheaf of rings $\frakR_2$ on $\calC$.

\subsection{Bipresheaves of rings and modules}
In this subsection, we will introduce some new notions, that is, bipresheaves of rings and their modules. Let us first recall the definition of biprsheaves of abelian groups. 

\begin{definition} {\rm (\cite[Definition 5.1]{MP21})} \label{bishab}
Let $\calC$ be a small category. A \emph{bipresheaf of abelian groups on} $\calC$ is a triple $\mathfrak{A}:=(\mathfrak{A}_1, \mathfrak{A}_2, \eta)$ where $\mathfrak{A}_1: \calC^{\op} \to \Ab$ is a presheaf of abelian groups on $\calC$, $\mathfrak{A}_2: \calC \to \Ab$ is a copresheaf of abelian groups on $\calC$, and $\eta:=\{\eta_x: \mathfrak{A}_1(x) \to \mathfrak{A}_2(x) ~|~ x \in \Ob \calC \}$ is a set of group homomorphisms such that, for each morphism $f: x \to y$ in $\calC$, the following diagram commutes:  
     $$
     \xymatrix{
      \mathfrak{A}_1(x) \ar[d]_{\eta_x} & \mathfrak{A}_1(y) \ar[l]_{\mathfrak{A}_1(f)} \ar[d]^{\eta_y} \\
      \mathfrak{A}_2(x) \ar[r]^{\mathfrak{A}_2(f)} & \mathfrak{A}_2(y), \\
     } 
     $$ 
that is, we have
$$
\eta_y=\mathfrak{A}_2(f) \circ \eta_x \circ \mathfrak{A}_1(f).
$$
\end{definition}

When replacing abelian groups with rings and group homomorphisms with unital ring homomorphisms, one can define the notion of bipresheaves of rings on $\calC$.

\begin{definition} \label{bishring}
Let $\calC$ be a small category. A \emph{bipresheaf of rings on} $\calC$ is a triple $\frakR:=(\mathfrak{R}_1, \mathfrak{R}_2, \theta)$ where $\frakR_1: \calC^{\op} \to \Ring$ is a presheaf of rings on $\calC$, $\frakR_2: \calC \to \Ring$ is a copresheaf of rings on $\calC$, and $\theta:=\{\theta_x: \frakR_1(x) \to \frakR_2(x) ~|~ x \in \Ob \calC \}$ is a set of unital ring homomorphisms such that, for each morphism $f: x \to y$ in $\calC$, the following diagram commutes:  
     $$
     \xymatrix{
      \frakR_1(x) \ar[d]_{\theta_x} & \frakR_1(y) \ar[l]_{\frakR_1(f)} \ar[d]^{\theta_y} \\
      \frakR_2(x) \ar[r]^{\frakR_2(f)} & \frakR_2(y), \\
     } 
     $$ 
that is, we have
$$
\theta_y=\frakR_2(f) \circ \theta_x \circ \frakR_1(f).
$$
\end{definition}

Given a bipresheaf of rings, the definition of its modules is given as follows. 

\begin{definition} \label{bishmod}
Let $\calC$ be a small category and let $\frakR:=(\mathfrak{R}_1, \mathfrak{R}_2, \theta)$ be a bipresheaf of rings on $\calC$. A \emph{bipresheaf of modules over} $\frakR$ is a bipresheaf of abelian groups $\frakM:=(\mathfrak{M}_1, \mathfrak{M}_2, \eta)$ on $\calC$ such that $\frakM_1$ is a presheaf of $\frakR_1$-modules, $\frakM_2$ is a copresheaf of $\frakR_2$-modules, and for each $x \in \Ob \calC, r \in \frakR_1(x)$, $m \in \frakM_1(x)$,
$$
\eta_x( m\cdot_1 r) = \theta_x(r) \cdot_2 \eta_x(m),
$$
where $\cdot_i$ mean $\frakR_i(x)$-actions on $\frakM_i(x)$ for $i=1,2$ repectively. We will omit the subscripts when there is no ambiguity.  
\end{definition}

Let $\calC$ be a small category and let $\frakR$ be a bipresheaf of rings on $\calC$, we will denote \emph{the category of $\frakR$-modules} and \emph{the category of bipresheaves of abelian groups on} $\calC$ by $\rMod \frakR$ and $\rMod \calC$, respectively.

\section{A characterization of the category of modules over a sum-id bipresheaf of rings} \label{cha}
In this section, we will first give the definitions of the linear Grothendieck construction of a bipresheaf of rings and a particular type of bipresheaf of rings, so-called the sum-id bipresheaf of rings. Then, using this linear Grothendieck construction, a characterization of the category of modules over a sum-id bipresheaf of rings will be given.

\subsection{Linear Grothendieck constructions of bipresheaves of rings}
In this subsection, a new construction, the linear Grothendieck construction of a bipresheaf of rings will be given. We already knew the linear Grothendieck construction of a (co)presheaf of rings (see \cite[Definition 2.2.1]{Wu24torsion}), so the key to define the linear Grothendieck construction of a bipresheaf of rings lies in encoding the connecting morphisms $\theta_x$ in Definition \ref{bishring}. By the work of Mitchell \cite{Mit72}, we know that a unital ring can be viewed as a single-object category, we will denote by $\bullet$ for its object. And a unital ring homomorphism can be seen as a functor.

\begin{definition} \label{grocon}
 Let $\calC$ be a small category and let $\frakR:=(\mathfrak{R}_1, \mathfrak{R}_2, \theta)$ be a bipresheaf of rings on $\calC$. Then a category $Gr(\frakR)$, called \emph{the linear Grothendieck construction of} $\frakR$, is defined as follows:
 \begin{enumerate}
     \item objects:
    $$
    \Ob Gr(\frakR):= \{~(x, \bullet_{x_1}, \bullet_{x_2}) ~|~ x \in \Ob \calC, \bullet_{x_1} \in \Ob \frakR_1(x), \bullet_{x_2} \in \Ob \frakR_2(x)~\} \overset{iden.~with}{=}\Ob \calC; 
    $$
     \item morphisms: for each $x, y \in \Ob Gr(\frakR)$, 
     let 
     $$
     \frakR_1(x) \otimes_{\theta} \frakR_2(y) := \{~ r_1 \otimes \frakR_2(f)(\theta_x(r_1))r_2 ~|~ f \in \Hom_{\calC}(x,y), r_1 \in \frakR_1(x), r_2 \in \frakR_2(y)~\}, 
     $$
     where the map $\frakR_2(f)(\theta_x(-))$ can be depicted as follows:
    $$
    \xymatrix @C=3pc {
    \frakR_1(x) \ar[r]^{\theta_x} & \frakR_2(x) \ar[r]^{\frakR_2(f)} &  \frakR_2(y),\\
     } 
    $$
     for simplicity, we will write $r_1 \otimes \frakR_2(f)(\theta_x(r_1))r_2$
     as $r_1 \otimes r_1^{\theta} r_2$.
     We set
     $$
     \Hom_{Gr(\frakR)}(x, y) := \bigoplus_{f \in \Hom_{\calC}(x,y)} \frakR_1(x) \otimes_{\theta} \frakR_2(y); 
     $$
     
     \item composition: for each
     $$
     \left(\cdots, \left(r_1 \otimes \frakR_2(f)(\theta_x(r_1))r_2 \right)_f, \cdots \right)_{f \in \Hom_{\calC}(x,y)} \in \Hom_{Gr(\frakR)}(x, y)
     $$
     and 
     $$
     \left(\cdots, \left(s_1 \otimes \frakR_2(g)(\theta_y(s_1))s_2 \right)_g, \cdots \right)_{g \in \Hom_{\calC}(y,z)} \in \Hom_{Gr(\frakR)}(y, z),
     $$
     the composition of them is given as follows: let $r_1^{\theta}:=\frakR_2(f)(\theta_x(r_1))$ and $s_1^{\theta}:=\frakR_2(g)(\theta_y(s_1))$, then    
     \begin{align*}
     & \left(\cdots, \left(s_1 \otimes s_1^{\theta} s_2 \right)_g, \cdots \right)_{g \in \Hom_{\calC}(y,z)} \circ 
     \left(\cdots, \left(r_1 \otimes r_1^{\theta} r_2 \right)_f, \cdots \right)_{f \in \Hom_{\calC}(x,y)} \\
    :=  & \left(\cdots, \left(\sum_{h=gf} \frakR_1(f)(s_1)r_1 \otimes \frakR_2(g)\left(r_1^{\theta} r_2\right) \left(s_1^{\theta}s_2\right)\right)_h, \cdots \right)_{h \in \Hom_{\calC}(x,z).} \\
    \end{align*}     
 \end{enumerate}
\end{definition}

\begin{remark} \label{remark}
\begin{enumerate}
    \item the composition rule in the definition above is as follows: the first component $(\frakR_1(f)(s_1)r_1)gf$ of the tensor product is the morphism composition of $r_1f$ and $s_1g$ in the linear Grothendieck construction $Gr(\frakR_1)$ of the presheaf $\frakR_1$ (see \cite[the paragraph below the Definition 3.1.1]{WX23}), and the second component $\left( \left(s_1^{\theta}s_2\right) \frakR_2(g)\left(r_1^{\theta} r_2\right) \right)gf$ of the tensor product (note that all rings considered in this paper are commutative) is the morphism composition of the morphisms $(r_1^{\theta} r_2)f$ and $(s_1^{\theta}s_2)g$ in the linear Grothendieck construction $Gr(\frakR_2)$ of the copresheaf $\frakR_2$;
    \item the composition is well-defined: since 
    \begin{align*}
    & \frakR_2(g)\left(r_1^{\theta} r_2\right) \left(s_1^{\theta}s_2\right)\\
   [s_1^{\theta}:=\frakR_2(g)(\theta_y(s_1))] =~& \frakR_2(g)\left(r_1^{\theta} r_2\right) \left(\frakR_2(g)(\theta_y(s_1)) s_2\right) \\
   by~def. ~\ref{bishring} =~&\frakR_2(g)\left(r_1^{\theta} r_2\right) \left(\frakR_2(g)(\frakR_2(f) ( \theta_x ( \frakR_1(f) (s_1)))) s_2\right)\\
   [r_1^{\theta}:=\frakR_2(f)(\theta_x(r_1))] =~& \frakR_2(g)\left(\frakR_2(f)(\theta_x(r_1)) r_2\right) \left(\frakR_2(g)(\frakR_2(f) ( \theta_x ( \frakR_1(f) (s_1)))) s_2\right) \\
   \frakR_2(g)~is~a~funct. =~&\frakR_2(g)\left(\frakR_2(f)(\theta_x(r_1))\right) \frakR_2(g)\left(r_2\right) \left(\frakR_2(g)(\frakR_2(f) ( \theta_x ( \frakR_1(f) (s_1)))) s_2\right) \\
    \frakR_2~is~a~funct. =~& \frakR_2(gf)(\theta_x(r_1)) \frakR_2(g)\left(r_2\right) \left(\frakR_2(gf) ( \theta_x ( \frakR_1(f) (s_1))) s_2\right) \\
   rings~are~comm. =~& \frakR_2(gf)(\theta_x(r_1)) \frakR_2(gf) ( \theta_x ( \frakR_1(f) (s_1)))  \left(\frakR_2(g)\left(r_2\right) s_2\right) \\
   \frakR_2(gf) ~and~ \theta_x~are~funct. =~& \frakR_2(gf)\left(\theta_x \left( r_1 \frakR_1(f)(s_1) \right)\right) \left(\frakR_2(g)(r_2)s_2\right) \\
     rings~are~comm.=~& \frakR_2(gf)\left(\theta_x \left(\frakR_1(f)(s_1)r_1 \right)\right) \left(\frakR_2(g)(r_2)s_2\right) \\
    abbreviated~as =~ & \left(\frakR_1(f)(s_1)r_1 \right)^{\theta} \left(\frakR_2(g)(r_2)s_2\right), \\
    \end{align*}

(In order to better understand the equalities above, one can chase the following diagram for $r_1 \in \frakR_1(x)$, $r_2 \in \frakR_2(y)$, $s_1 \in \frakR_1(y)$, $s_2 \in \frakR_2(z)$ and $x \overset{f}{\to} y \overset{g}{\to} z$. The morphisms in the diagram below can be viewed both as functors and as unital ring homomorphisms, also all rings in the diagram below are assumed to be commutative.)
    $$
    \xymatrix @C=3pc { \ar @{} [dr]|{\circlearrowright}
    \frakR_2(x) \ar[r]^{\frakR_2(f)} & \ar @{} [dr]|{\circlearrowright} \frakR_2(y) \ar[r]^{\frakR_2(g)} &  \frakR_2(z)\\
    \frakR_1(x) \ar[u]^{\theta_x} & \frakR_1(y) \ar[u]_{\theta_y} \ar[l]^{\frakR_1(f)} &  \frakR_1(z) \ar[u]_{\theta_z} \ar[l]^{\frakR_1(g)} \\
     } 
    $$
    
 therefore, we have 
    \begin{align*}
     & \frakR_1(f)(s_1)r_1 \otimes \frakR_2(g)\left(r_1^{\theta} r_2\right) \left(s_1^{\theta}s_2\right) \\
     =~ & \frakR_1(f)(s_1)r_1 \otimes \left(\frakR_1(f)(s_1)r_1 \right)^{\theta} \left(\frakR_2(g)(r_2)s_2\right) \\
      \in~ & \frakR_1(x) \otimes_{\theta} \frakR_2(z). \\
    \end{align*}
It follows that 
$$
\left(\cdots, \left(\sum_{h=gf} \frakR_1(f)(s_1)r_1 \otimes \frakR_2(g)\left(r_1^{\theta} r_2\right) \left(s_1^{\theta}s_2\right)\right)_h, \cdots \right)_{h \in \Hom_{\calC}(x,z)} \in \Hom_{Gr(\frakR)}(x, z),
$$
so the composition is well-defined;  
    \item if $\mathds{1}_x := (\cdots, 0, \left(1_{\frakR_1(x)} \otimes 1_{\frakR_2(x)}\right)_{1_x \in \Hom_{\calC}(x,x)}, 0, \cdots)$, then $\mathds{1}_x \in \Hom_{Gr(\frakR)}(x, x)$, where $1_{\frakR_i(x)}$ (resp. $0$) is the identity element (resp. the zero element) of the ring $\frakR_i(x)$ for $i=1,2$. One can check that $\mathds{1}_x$ is the identity morphism on $x \in \Ob Gr(\frakR)$, and it is routine to check that $Gr(\frakR)$ is a category.     
\end{enumerate}    
\end{remark}

Now, a new class of bipresheaves of rings will be introduced, which will be used in the main characterization theorem later. 

\begin{definition} \label{specialbishring}
Let $\calC$ be a small category. A bipresheaf $\frakR$ of rings on $\calC$ 
is said to be \emph{sum-id} if, for each morphism of its linear Grothendieck construction 
     $$
     \left(\cdots, \left(r_1 \otimes \frakR_2(f)(\theta_x(r_1))r_2 \right)_f, \cdots \right)_{f \in \Hom_{\calC}(x,y)} \in \Hom_{Gr(\frakR)}(x, y),
     $$  
the following condition holds
$$
\sum_{\substack{f \in \Hom_{\calC}(x,y) \\ r_1 \in \frakR_1(x) \\ r_2 \in \frakR_2(y)}} r_1^{\theta} r_2=1. 
$$
That is, the (finite) \underline{sum} of all the second components of the tensor products of each morphism $ \left(\cdots, \left(r_1 \otimes r_1^{\theta} r_2 \right)_f, \cdots \right)_{f \in \Hom_{\calC}(x,y)}$ is equal to the \underline{id}entity. 
\end{definition}

\begin{example}
Let $\frakR=(\frakR_1,\frakR_2,\theta)$ be a bipresheaf of rings on $\calC$, where $\frakR_1: \calC^{\op} \to \Ring$ is an arbitrary presheaf of rings, $\frakR_2: \calC \to \Ring$ is a constant copresheaf valued in a trivial ring $\{1\}$ (i.e. a one-element ring with $0=1$), and $\theta_x: \frakR_1(x) \to \frakR_2(x)$ are trivial ring homomorphisms for all $x \in \Ob \calC$, then one can check that $\frakR$ is a sum-id bipresheaf of rings. 
\end{example}

\subsection{Characterization theorem}
In this subsection, we will characterize the category of modules of a sum-id bipresheaf of rings as the category of bipresheaves of abelian groups. The following theorem can be viewed as a bipresheaves version of Howe's characterization theorem (see \cite[Proposition 5]{How81}). 

\begin{theorem} \label{mainthm}
Let $\calC$ be a small category and let $\frakR$ be a sum-id bipresheaf of rings on $\calC$, then we have the following category equivalence
 $$
 \rMod \frakR \simeq \rMod Gr(\frakR).
 $$ 
Consequently, the category $\rMod \frakR$ of bipresheaves of modules over $\frakR$ is non-abelian.     
\end{theorem}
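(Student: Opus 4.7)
The plan is to mimic Howe's strategy by exhibiting a pair of quasi-inverse functors
\[
F \colon \rMod \frakR \longrightarrow \rMod Gr(\frakR), \qquad G \colon \rMod Gr(\frakR) \longrightarrow \rMod \frakR,
\]
and then to deduce non-abelianness of the source from that of the target, which is a category of bipresheaves of abelian groups. For $F$, given $\frakM = (\frakM_1, \frakM_2, \eta) \in \rMod \frakR$, I keep the underlying abelian groups $F(\frakM)_i(x) := \frakM_i(x)$ and the same connecting maps $\eta_x$, and prescribe the action of a basic morphism $(r_1 \otimes r_1^\theta r_2)_f \in \Hom_{Gr(\frakR)}(x,y)$ by
\[
m \longmapsto \frakM_1(f)(m) \cdot r_1 \qquad \text{and} \qquad n \longmapsto r_2 \cdot \frakM_2(f)(n)
\]
on the presheaf and copresheaf sides respectively, then extend biadditively over the direct sum decomposition of Definition \ref{grocon}. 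Functoriality on each side unravels from the composition formula of Definition \ref{grocon} via a standard semidirect-product calculation, essentially mirroring the chase recorded in Remark \ref{remark}(2).

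For $G$, I use the evident embedding $\iota \colon \calC \hookrightarrow Gr(\frakR)$ (the identity on objects, $f \mapsto (1 \otimes 1)_f$) to restrict $\frakN = (\frakN_1, \frakN_2, \zeta) \in \rMod Gr(\frakR)$ to a pair of (co)presheaves on $\calC$, retain $\eta := \zeta$ as connecting maps, and equip $G(\frakN)_1(x)$ (resp.\ $G(\frakN)_2(x)$) with the $\frakR_1(x)$-action $(m, r) \mapsto \frakN_1((r \otimes r^\theta)_{1_x})(m)$ (resp.\ the $\frakR_2(x)$-action $(n, r) \mapsto \frakN_2((1 \otimes r)_{1_x})(n)$) defined via the indicated endomorphisms of $x$ in $Gr(\frakR)$. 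That these are ring actions follows from the composition rule in $Gr(\frakR)$, and the $\frakR$-module compatibility $\eta_x(m \cdot r) = \theta_x(r) \cdot \eta_x(m)$ of Definition \ref{bishmod} drops out of the bipresheaf compatibility of $\zeta$ against $(r \otimes r^\theta)_{1_x}$.

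The main obstacle is verifying the bipresheaf compatibility of the connecting maps of $F(\frakM)$ against every morphism $\phi$ of $Gr(\frakR)$, not merely those coming from $\calC$, as well as the unit $FG \cong \Id$; both are precisely where the sum-id hypothesis enters. For the first, on a basic summand $(r_1 \otimes r_1^\theta r_2)_f$ a direct computation using Definition \ref{bishmod} and the identity $r_1^\theta = \frakR_2(f)(\theta_x(r_1))$ yields
\[
F(\frakM)_2(\phi) \circ \eta_x \circ F(\frakM)_1(\phi)(m) = (r_1^\theta r_2) \cdot \frakM_2(f)\bigl(\eta_x(\frakM_1(f)(m))\bigr) = (r_1^\theta r_2) \cdot \eta_y(m),
\]
so summing over all basic summands of $\phi$ inside a given $f$-direct-summand reduces the required identity $\eta_y = F(\frakM)_2(\phi) \circ \eta_x \circ F(\frakM)_1(\phi)$ to $\sum r_1^\theta r_2 = 1$, which is Definition \ref{specialbishring}. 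The analogous sum-id identity forces any $\frakN_i(\phi)$ to split, inside the direct sum of $\Hom_{Gr(\frakR)}$, as a combination of $\iota$-factors composed with the defining endomorphisms of the action on $G(\frakN)$, yielding $FG(\frakN) \cong \frakN$; the counit $GF \cong \Id$ is immediate from the construction. Granting the equivalence, non-abelianness of $\rMod \frakR$ reduces to that of $\rMod Gr(\frakR)$: the over-determined coherence $\eta_y = \mathfrak{A}_2(f) \circ \eta_x \circ \mathfrak{A}_1(f)$ of Definition \ref{bishab} fails in general to be inherited by pointwise kernels and cokernels in $\Ab$, so the canonical image-coimage factorization axiom of an abelian category fails.
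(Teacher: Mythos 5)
Your proposal is correct and follows essentially the same route as the paper: the functors $F$ and $G$ coincide with the paper's $\Psi$ and $\Phi$ (basic morphisms $(r_1\otimes r_1^{\theta}r_2)_f$ acting by $m\mapsto \frakM_1(f)(m)\cdot r_1$ and $n\mapsto r_2\cdot\frakM_2(f)(n)$, with the module structures on the restriction recovered from the endomorphisms $(r\otimes r^{\theta})_{1_x}$ and $(1\otimes r)_{1_x}$), the sum-id hypothesis enters at exactly the same point (reducing the compatibility $\eta_y = F(\frakM)_2(\phi)\circ\eta_x\circ F(\frakM)_1(\phi)$ to $\sum r_1^{\theta}r_2 = 1$), and non-abelianness is deduced from that of the category of bipresheaves of abelian groups on $Gr(\frakR)$, for which you supply the MacPherson--Patel style reason the paper cites.
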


\begin{proof}
By \cite[the third line from the bottom of page 4]{MP21}, we know that the category of bisheaves of abelian groups is non-abelian. Hence, in order to prove the theorem, we only have to show that the category equivalence $\rMod \frakR \simeq \rMod Gr(\frakR)$ holds. Our strategy is to construct two functors which give rise to the desired equivalence. 

Firstly, let us define a functor 
    $$
    \Psi: \rMod \frakR \to \rMod Gr(\frakR)
    $$ 
by
   $$
	\mathfrak{M}=(\frakM_1, \frakM_2, \eta) \mapsto \mathfrak{F}_{\mathfrak{M}}=(\frakF_1, \frakF_2, \eta),
   $$ 
where $\frakF_1, \frakF_2$ are defined as follows:
$$
\xymatrix{
    Gr(\frakR)^{\op} \ar[rr]^{\frakF_1} &  & \Ab   \\
   x  \ar@{}[u]|{\begin{sideways}$\in$\end{sideways}} \ar[dd]_{\left(\cdots, \left(r_1 \otimes r_1^{\theta} r_2 \right)_f, \cdots \right)_f} & & \frakM_1(x) \ar@{}[u]|{\begin{turn}{90}$\in$\end{turn}}  \\
     & \longmapsto &  \\
   y  & & \frakM_1(y) \ar[uu]_{\sum_{r_1,f}(- \cdot r_1) \circ \frakM_1(f)}  \\
     } 
$$
and 
$$
\xymatrix{
    Gr(\frakR) \ar[rr]^{\frakF_2} &  & \Ab   \\
   x  \ar@{}[u]|{\begin{sideways}$\in$\end{sideways}} \ar[dd]_{\left(\cdots, \left(r_1 \otimes r_1^{\theta} r_2 \right)_f, \cdots \right)_f} & & \frakM_2(x) \ar@{}[u]|{\begin{turn}{90}$\in$\end{turn}} \ar[dd]^{\sum_{r_2,f}(r_2 \cdot -) \circ \frakM_2(f)} \\
     & \longmapsto &  \\
   y  & & \frakM_2(y)   \\
     } 
$$
It is not hard to check that $\frakF_1$ (resp. $\frakF_2$) is a presheaf (resp. copresheaf). We also have to check the following diagram commutes.
     $$
     \xymatrix @C=7pc {
      \frakM_1(x) \ar[d]_{\eta_x}  &  \frakM_1(y) \ar[d]^{\eta_y} \ar[l]_{\sum_{r_1,f}(- \cdot r_1) \circ \frakM_1(f)} \\
      \frakM_2(x) \ar[r]^{\sum_{r_2,f}(r_2 \cdot -) \circ \frakM_2(f)} &  \frakM_2(y) \\
     } 
     $$ 
This is true because, for each $m \in \frakM_1(y)$,
\begin{align*}
     &\left( \left(\sum_{r_2,f}(r_2 \cdot -) \circ \frakM_2(f) \right) \circ \eta_x \circ \left(\sum_{r_1,f}(- \cdot r_1) \circ \frakM_1(f)\right) \right)(m) \\
    =& \sum_{r_2,f} r_2 \cdot \frakM_2(f) \left( \eta_x \left( \sum_{r_1,f} \frakM_1(f)(m) \cdot r_1 \right) \right) \\
    \eta_x~is~a~ring~homo.=& \sum_{r_2,f} r_2 \cdot \frakM_2(f) \left(  \sum_{r_1,f} \left( \eta_x \left( \frakM_1(f)(m) \cdot r_1 \right) \right) \right) \\
    by~def.~\ref{bishmod}=& \sum_{r_2,f} r_2 \cdot \frakM_2(f) \left(\sum_{r_1,f} \theta_x(r_1) \cdot \eta_x( \frakM_1(f)(m) ) \right)  \\
    \frakM_2(f)~is~a~ring~homo.=& \sum_{r_2,f} r_2 \cdot \left(\sum_{r_1,f}  \frakM_2(f) \left(\theta_x(r_1) \cdot \eta_x( \frakM_1(f)(m) ) \right) \right)  \\
    \frakM_2~is~an~\frakR_2 \mbox{-} module=& \sum_{r_2,f} r_2 \cdot \left(\sum_{r_1,f} \left( \frakR_2(f)(\theta_x(r_1)) \cdot \frakM_2(f) \left(\eta_x( \frakM_1(f)(m) ) \right) \right) \right)  \\
    by~def.~\ref{bishab}= & \sum_{r_2,f} r_2 \cdot \left(\sum_{r_1,f} 
    \left( r_1^{\theta}  \cdot \eta_y(m) \right)  \right) \\
    =& \sum_{r_2,f} \sum_{r_1,f} \left( r_2 \cdot \left( r_1^{\theta}  \cdot \eta_y(m) \right) \right) \\
    =& \sum_{r_2,f} \sum_{r_1,f} \left( r_2 r_1^{\theta} \right) \cdot \eta_y(m)   \\
    rings~are~commu.=& \sum_{r_1,r_2,f} \left( r_1^{\theta} r_2 \right) \cdot \eta_y(m)  \\ 
    =& \left(\sum_{r_1,r_2,f} r_1^{\theta} r_2 \right) \cdot \eta_y(m)  \\
    by~def.~\ref{specialbishring}=& \eta_y(m). \\
\end{align*}

Secondly, let us define another functor 
    $$
    \Phi: \rMod Gr(\frakR) \to  \rMod \frakR
    $$ 
by
   $$
   \mathfrak{F}=(\frakF_1, \frakF_2, \eta) \mapsto  \mathfrak{M}_{\frakF}=(\frakM_1, \frakM_2, \eta),
   $$ 
where $\frakM_1, \frakM_2$ are defined as follows:
$$
\xymatrix{
    \calC^{\op} \ar[rr]^{\frakM_1} &  & \Ab   \\
   x  \ar@{}[u]|{\begin{sideways}$\in$\end{sideways}} \ar[dd]_{f} & & \frakF_1(x) \ar@{}[u]|{\begin{turn}{90}$\in$\end{turn}}  \\
     & \longmapsto &  \\
   y  & & \frakF_1(y) \ar[uu]_{\mathfrak{F}_1[(\cdots, 0, \left( 1_{\frakR_1(x)} \otimes 1_{\frakR_2(y)} \right)_f,0, \cdots)]}  \\
     } 
$$
and 
$$
\xymatrix{
    \calC \ar[rr]^{\frakM_2} &  & \Ab   \\
   x  \ar@{}[u]|{\begin{sideways}$\in$\end{sideways}} \ar[dd]_{f} & & \frakF_2(x) \ar@{}[u]|{\begin{turn}{90}$\in$\end{turn}} \ar[dd]^{\mathfrak{F}_2[(\cdots, 0, \left( 1_{\frakR_1(x)} \otimes 1_{\frakR_2(y)} \right)_f,0, \cdots)]} \\
     & \longmapsto &  \\
   y  & & \frakF_2(y)   \\
     } 
$$
It is not hard to check that $\frakM_1$ (resp. $\frakM_2$) is a presheaf (resp. copresheaf). We also have to check the following diagram commutes.
     $$
     \xymatrix @C=10pc {
      \frakF_1(x) \ar[d]_{\eta_x}  &  \frakF_1(y) \ar[d]^{\eta_y} \ar[l]_{\mathfrak{F}_1[(\cdots, 0, \left( 1_{\frakR_1(x)} \otimes 1_{\frakR_2(y)} \right)_f,0, \cdots)]} \\
      \frakF_2(x) \ar[r]^{\mathfrak{F}_2[(\cdots, 0, \left( 1_{\frakR_1(x)} \otimes 1_{\frakR_2(y)} \right)_f,0, \cdots)]} &  \frakF_2(y) \\
     } 
     $$ 
This is true as $\frakF$ is a bipresheaf of abelian groups, by Definition \ref{bishab}, the diagram above commutes.

We also have to define an $\frakR_1$(resp. $\frakR_2$)-module structure on $\frakM_1$(resp. $\frakM_2$). For an $\frakR_1$-module structure on $\frakM_1$, we define it as follows: for each $r_1 \in \frakR_1(x)$, $m \in \frakF_1(x)$,
$$
m \cdot r_1 := \frakF_1[(\cdots, 0, \left( r_1 \otimes r_1^{\theta} \right)_{1_x},0, \cdots)](m).
$$
This is indeed an action: because
\begin{align*}
  & m \cdot 1_{\frakR_1(x)} \\ 
  =& \frakF_1[(\cdots, 0, \left( 1_{\frakR_1(x)} \otimes 1_{\frakR_1(x)}^{\theta} \right)_{1_x},0, \cdots)](m) \\
  =& \frakF_1 (\mathds{1}_x) (m) \\
  =& m,
\end{align*}
and
\begin{align*}
  & (m \cdot r_1)  \cdot r_1' \\    
  =& \frakF_1[(\cdots, 0, \left( r_1' \otimes r_1'^{\theta} \right)_{1_x},0, \cdots)]  \left\{ \frakF_1[(\cdots, 0, \left( r_1 \otimes r_1^{\theta} \right)_{1_x},0, \cdots)](m) \right\}\\
 \frakF_1~is~a~presh.=& \frakF_1[(\cdots, 0, \left( r_1 \otimes r_1^{\theta} \right)_{1_x},0, \cdots) \circ (\cdots, 0, \left( r_1' \otimes r_1'^{\theta} \right)_{1_x},0, \cdots)]  (m) \\
 =& \frakF_1[(\cdots, 0, \left( r_1 r_1' \otimes r_1'^{\theta} r_1^{\theta} \right)_{1_x},0, \cdots)]  (m) \\
rings~are~comm. =& \frakF_1[(\cdots, 0, \left( r_1' r_1 \otimes r_1'^{\theta} r_1^{\theta} \right)_{1_x},0, \cdots)]  (m) \\
\theta_x~is~a~ring~homo.=& \frakF_1[(\cdots, 0, \left( r_1' r_1 \otimes (r_1' r_1)^{\theta} \right)_{1_x},0, \cdots)]  (m) \\
 =& m \cdot (r_1' r_1)  \\
 rings~are~comm.=& m \cdot (r_1 r_1').  \\
\end{align*}

Now, let us define an $\frakR_2$-module structure on $\frakM_2$: for each $s_1 \in \frakR_2(y)$, $m \in \frakF_2(y)$,
$$
s_1 \cdot m := \frakF_2[(\cdots, 0, \left( 1_{\frakR_1(y)} \otimes s_1 \right)_{1_y},0, \cdots)](m).
$$
This is indeed an action: because
\begin{align*}
    & 1_{\frakR_2(y)} \cdot m \\
    =& \frakF_2[(\cdots, 0, \left( 1_{\frakR_1(y)} \otimes 1_{\frakR_2(y)} \right)_{1_y},0, \cdots)](m)\\
    =& \frakF_2 (\mathds{1}_y) (m) \\
    =& m,
\end{align*}
and
\begin{align*}
    & s_1' \cdot ( s_1 \cdot m) \\
    =& \frakF_2[(\cdots, 0, \left( 1_{\frakR_1(y)} \otimes s_1' \right)_{1_y},0, \cdots)] \left\{ \frakF_2[(\cdots, 0, \left( 1_{\frakR_1(y)} \otimes s_1 \right)_{1_y},0, \cdots)](m) \right \}\\
    \frakF_2~is~a~copresh.=& \frakF_2[(\cdots, 0, \left( 1_{\frakR_1(y)} \otimes s_1' \right)_{1_y},0, \cdots) \circ (\cdots, 0, \left( 1_{\frakR_1(y)} \otimes s_1 \right)_{1_y},0, \cdots)] (m) \\
    =& \frakF_2[(\cdots, 0, \left( 1_{\frakR_1(y)} \otimes s_1 s_1' \right)_{1_y},0, \cdots)](m) \\
    rings~are~comm.=& \frakF_2[(\cdots, 0, \left( 1_{\frakR_1(y)} \otimes s_1' s_1 \right)_{1_y},0, \cdots)](m) \\
    =& (s_1' s_1) \cdot m .\\
\end{align*}

One can check that the functors $\Phi$ and $\Psi$ give rise to the desired equivalence. This completes the proof.
\end{proof}

\section*{Acknowledgments}
I would like to thank my advisor Prof. Fei Xu \begin{CJK*}{UTF8}{}
\CJKtilde \CJKfamily{gbsn}(徐斐) \end{CJK*} in Shantou University. 
\bibliographystyle{plain}
\bibliography{ref}
\end{document}